\newsavebox{\savepar}  
\newenvironment{boxit}{\begin{center} \begin{lrbox}{\savepar}
\begin{minipage}[b]{120mm}}
{\end{minipage}\end{lrbox}\fbox{\usebox{\savepar}} \end{center}}
\newtheorem{prethm}{{\bf Theorem}}
\newtheorem{precor}{{\bf Corollary}}
\newtheorem{preprop}{{\bf Proposition}}
\newtheorem{preque}{{\bf Question}}
\newtheorem{preques}{{\bf Question}}
\newtheorem{prelemma}{{\bf Lemma}}
\newenvironment{lemma}{\begin{prelemma}{\hspace{-0.5
em}{\bf.}}}{\end{prelemma}}
\newtheorem{prefact}{{\bf Fact}}
\newtheorem{preobs}{{\bf Observation}}
\newtheorem{prefig}{{\bf Figure}}
\newtheorem{prelemm}{{\bf Lemma}}
\newtheorem{preex}{{\bf Example}}
\newtheorem{prepro}{{\bf Proposition}}
\newtheorem{prelem}{{\bf Theorem}}
\newtheorem{preconj}{{\bf Conjecture}}
\newtheorem{predeff}{{\bf Definition}}
\def\newpic#1{}
\date{}
\begin{document}

\title{
{\Large{\bf A generalization of line graphs via link scheduling in
wireless networks}}}
%


\author{Ali Ghiasian$^a$, Behnaz Omoomi$^b$ and Hossein Saidi$^a$}

\maketitle
\begin{center}
$^a$Department of Electrical and Computer Engineering\\
$^b$Department of Mathematical Science\\
Isfahan University of Technology\\
84156-83111, Isfahan, Iran
\end{center}




\begin{abstract}
In single channel wireless networks, concurrent transmission at different links may interfere with each other. To improve system throughput, a scheduling algorithm is necessary to choose a subset of links at each time slot for data transmission. Throughput optimal link scheduling discipline in such a wireless network is generally an NP-hard problem. In this paper, we develop a polynomial time algorithm for link scheduling problem provided that network conflict graph is line multigraph. (i.e., line graph for which its root graph is multigraph). This result can be a guideline for network designers to plan the topology of a stationary wireless network such that the required conditions hold and then the throughout optimal algorithm can be run in a much less time. \\

\textbf{Keywords:} Link scheduling, Wireless network, Line graph, Line multigraph, Root graph, Conflict graph.
\end{abstract}

\footnotetext[1]{This work was supported by Iran Telecom Research Center (ITRC)}

\section{Introduction}

The underlying wireless network is shown by an undirected and connected graph $G(V,E)$ in which $V$ is the set of vertices and
$E$ is the set of edges. Every node of the network is represented by a vertex in graph $G$.
Two vertices are adjacent if they are within  communication range of each other. We assume that time is slotted. In single channel wireless networks, concurrent transmission at the same time slot and different links (edges) may interfere with each other. Therefore, a scheduling discipline is necessary to choose a subset of links at each time slot such that
packets do not corrupt due to the interference. 

Depending on the method used to deal with interference in such a radio network,
different models have been introduced in the literature. A general approach to deal with interference is to consider a {\it conflict graph}. The conflict graph of a given graph $G(V,E)$ is graph $G^c(E,L)$. Each vertex in $G^c$ is corresponding
to an edge in $G$, and two vertices in $G^c$ are adjacent whenever their corresponding edges in $G$ are interfering edges.
In this approach, when a link is ready for transmission, only a subset of links which are called the {\it interference set} needs to be considered as interfering links.
In other words, each link is associated with an interference set such that the
link can be scheduled only if no other link in its interference set is scheduled. Note that if link $l_1$ interferes with link $l_2$ then $l_2$ interferes with $l_1$ as well.
  Finding a set of non interfering links in $G$ is the same as finding an {\it independent set} in $G^c$. An Independent set in a graph is a collection of vertices such that there are no edges between them. 
  
  We shortly describe how the so called conflict graph can be constructed based on general {\it M-hop interference model}.
First, we refer to some more terminologies of graph theory which we use throughout the paper. 
The {\it distance}  between two vertices $u$ and $v$
in a graph $G$, denoted by $d_G(u,v)$, is the length of a
shortest path between $u$ and $v$ in $G$. The distance between
two edges is defined as a function $d: (E,E) \longrightarrow
\mathbb{N}$,   such that for every two edges $u_1u_2$ and $v_1v_2$,
$d(u_1u_2, v_1v_2)=\min_{\{i,j\}\in\{1,2\}} d_G(u_i,v_j)$. The
{\it power} of a graph $G$, denoted by $G^t$, $t \in \mathbb{N}$, is a graph with the same
set of vertices as $G$ in which two vertices $u$ and $v$ are
adjacent in $G^t$ if and only if $d_G(u,v)\leq t$. A {\it loop}  in graph is an edge that connects a vertex to itself. {\it Multiple edges} are two or more edges that are incident to the same two vertices. A {\it multigraph}  is a graph with multiple edges. A {\it simple}  graph is a graph without
loops and/or multiple edges. An {\it edge contraction}  is an operation which removes an edge from a graph while simultaneously merging its   end  vertices. 
We refer to~\cite{west2000} for other graphical notations and terminologies not
described in this paper.

The {\it line graph } of a graph $G=(V,E)$, denoted by $L(G)$, is a
graph with vertex set $E$, where two vertices of $L(G)$ are
adjacent if their corresponding  edges in $G$ are adjacent, i.e.
they have a common end vertex. In this case, we call graph $G$ the
{\it root graph} of $L(G)$. A graph $G$ is called a line graph if there is a root graph $G'$ such that $G=L(G')$.

Following the definition of line graph, we now introduce {\it M-hop interference model} ~\cite{sharma2006} which is mostly used to construct the conflict graph. Under this general interference model, two edges $l_1$ and $l_2$ are interfering edges if $d(l_1,l_2) \leq M$.  Therefore, the conflict graph can be defined as follows,

\begin{align} \label{eq2}
G^c (E,L)=[L(G)]^M, \quad M\geq 1.
\end{align}

This general interference model is applicable for extensive number of practical applications such as Bluetooth, FH-CDMA systems, Wireless LAN (IEEE 802.11 standard), etc. ~\cite{sharma2006,yi20082}. More details about different interference models can be found in \cite{santi2009}. For example, in Bluetooth and FH-CDMA systems,  two adjacent  edges  are interfering edges. Link scheduling in these networks results in finding a {\it matching} in graph $G$. 
A matching in a graph is a set of edges with no common end vertices. 

In IEEE 802.11 wireless LAN network under the RTS/CTS scheme, two edges that are either adjacent or  are both incident on a common edge are interfering edges. Link scheduling in this network results in finding a strong matching in graph $G$. A matching is called {\it strong matching} if no edge connects two edges of the matching~\cite{golumbic2000}.

Note that in IEEE 802.11 wireless LAN networks, the conflict graph can be constructed by using Eq.(\ref{eq2}) and setting $M=2$ while in Bluetooth networks the conflict graph is the same as $L(G)$ which is derived by setting $M=1$ in Eq.(\ref{eq2}).

Link scheduling algorithms are of interest due to their impact on the network throughput. Throughput optimal algorithms have been studied extensively in the literature \cite{brzezinski2008, gupta20102, modiano2006, ni2009, tassiulas1992, yi20081, zussman2008}. Let assume that associated to each link is a queue and packets are queued before they are transmitted over the link. A well known throughput optimal link scheduling algorithm is to find maximum weight independent set $(MWIS)$ at each time slot in the conflict graph, where the weight of each vertex is defined as the queue length of its corresponding link in the network graph. 

Finding the $MWIS$ is one of the known NP-Hard problems in graph theory~\cite{sharma2006}. However, if the conflict graph is line graph, then finding $MWIS$ in $G^c$ equals finding maximum weight matching ($MWM$) in its root graph. Since there are polynomial time complexity algorithms for $MWM$ problem \cite{lawler2001}, then the overall solution is much simpler under this assumption. The key point here that catches our attention is that the root graph does not required to be simple graph. If the root graph is multigraph, it is enough to keep the heaviest edge among multiple edges and remove the others before running $MWM$ algorithm. 

Following to this motivation for studying the line graphs, we explore these kind of graphs more precisely. Line graphs are well characterized class of graphs. In \cite{beineke1968} it is
proved that a graph $G$ is a line graph of a simple graph $G'$ if
and only if $G$ does not contain any of the forbidden nine graphs, depicted
in Figure~1, as an {\it induced subgraph}. An induced subgraph of a graph is a subset of vertices
of the graph  with edges whose endpoints are both in this subset. Whitney proved that with two
exceptional case (triangle and star with three branches, $G_1$ in Figure~1) the
structure of $G'$ can be recovered completely from its line graph~\cite{west2000}.

\begin{figure*}[t!]
\centering
\label {fig1}\includegraphics[trim= 25mm 210mm 5mm 10mm,clip,scale=.7]{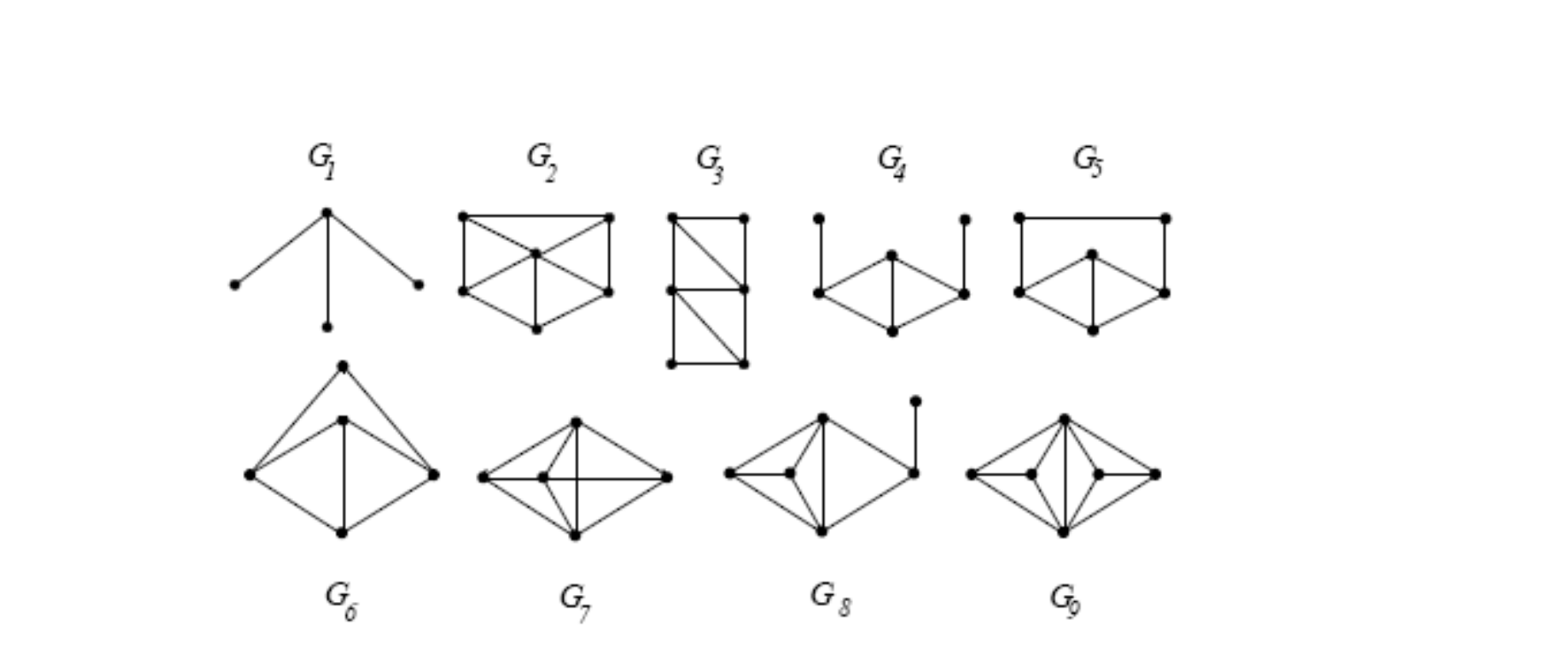}
\caption{Nine minimal forbidden graphs.}
\end{figure*}

It is worth mentioning that Lehot has developed an optimal
algorithm which can be run in linear time to detect whether a
graph is line graph and beget its root graph~\cite{lehot1974}.
Lehot algorithm considers only simple graphs as root graphs.

In this paper, following our motivation that allows the root graph to be multigraph, we introduce a generalization of line graph to {\it line multigraph}, i.e., line graph for which its root graph is
multigraph. Then, we extend Lehot algorithm to line multigraphs and
propose a low complexity algorithm, termed as extended Lehot ({\it eLehot}), for detecting whether a graph
is line multigraph and output its root graph. Accordingly, by allowing the root graph to be multigraph, we relax the constraint shown in Figure~1, to seven minimal forbidden graphs . Not only the number of forbidden graphs are reduced, prevention of them in the topology construction of the graph is much simpler because they are larger in the number of vertices and edges. 

The results of this paper introduce a new approach in topology control algorithms in wireless networks where the final target is complexity reduction. It complements the original motivation of topology control disciplines which tries to minimize energy consumption while the connectivity of network graph is guaranteed \cite{santi2005, wang2008}. Then, a new design dimension can be added to topology control algorithms by the results of this paper. As a result, based on available polynomial time complexity algorithms for $MWM$ problem \cite{lawler2001} and due to the linear time complexity of Lehot algorithm \cite{lehot1974}, we develop a polynomial time complexity approach for link scheduling algorithm under general M-hop interference model for the class of graphs that their conflict graphs are line multigraphs. 
In addition to topology control algorithms, the results of this paper can be used as a guideline for network designers when they want to design the topology of a stationary wireless network, e.g., positioning the routers/gateways of a wireless mesh network (WMN). If they prohibit the construction of derived forbidden graphs in the network's conflict graph, the throughput optimal link scheduling algorithm can be run in the network in much less time. Then the overall performance of the network is obviously promoted.

\begin{figure*}[t!]
\centering
\label {fig2}\includegraphics[trim= 25mm 250mm 5mm 10mm,clip,scale=.9]{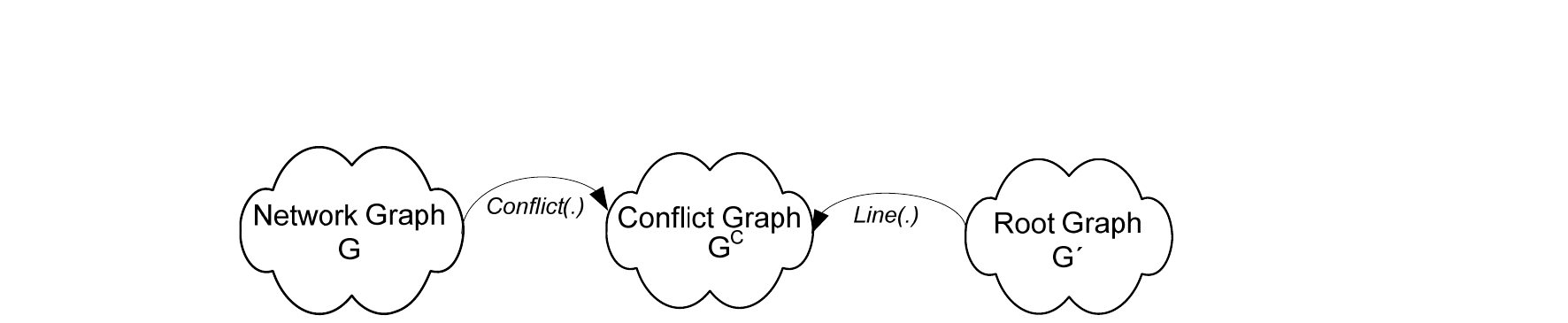}
\caption{Relation between network graph, conflict graph and root graph.}
\end{figure*}

Let us consider Figure~2 which depicts the idea. The number of
edges in $G'$ is equal to the number of edges in $G$, both equal
to the number of vertices in $G^c$. Note that, since there is a one to one mapping
between each vertex of $G^c$ and each edge in $G'$, then there is
also a one to one relation between edges in $G$ and edges in $G'$.
Also, note that finding a scheduling in $G$ is the same as finding
an independent set in $G^c$, while finding an
independent set in $G^c$ is equivalent to finding a 
matching in $G'$. Thus, we can deploy a policy of edge selection on
$G'$ to obtain an interfering free link selection in network graph $G$. 

The structure of this paper is as follows.  
In Section~2, we propose eLehot algorithm as an extension to Lehot algorithm.
We analyze eLehot's algorithm in Section~3. Finally, we conclude in Section~4.


\section{eLehot Algorithm}

Suppose that the graph $G^c$ is given and we want to find the root graph $G'$
such that $L(G') = G^c$ (if $G'$ exists), where $G'$ may be a
multigraph.
Two vertices $u$ and $v$ are called {\it true
twins} if they are adjacent and their neighborhoods are the same.
If two non-adjacent vertices have identical neighborhoods they are
called {\it false twins}. In the rest of the paper, where ever we use the term twin vertices, we mean true twin vertices.  
The following observation indicates that $k$ mutually
true twin vertices in $G^c$ are the vertices of a {\it clique}. A clique is a complete subgraph of a graph.

\begin{preobs} If vertices $u_1$ and $u_2$ are twins, and if $u_2$ and $u_3$ are twins as well, then $u_1$ and
$u_3$ are twins. If $u_1,u_2,\ldots,u_t$ are mutually twin vertices, then they are the vertices of a clique $K_t$,
$t>0$. 
\end{preobs}

We describe eLehot algorithm in Figure 3.

\begin{figure}[t!]
\begin{center}
\begin{boxit}
\begin{tabbing} 
{\bf Algorithm} {\tt eLehot} \\
{\bf Input :} $G^c$\\
mm \=mmmm \=mmmm\= mmmm \=mmm \=mmm mmm \kill
{\bf Step 1.} Mark all edges in $G^c$ which their end vertices are twins. \\
\quad Then contract all marked edges.\\
\quad Label vertices with the number of contracted edges incident on it. \\
\quad Finally, consider the obtained simple vertex weighted graph as graph $H$.\\
\\
{\bf Step 2.} Run Lehot algorithm on the graph $H$.\\
\quad (refer to \cite{lehot1974} for the description of Lehot algorithm).\\
\\
{\bf Step 3.} If Lehot algorithm outputs the root graph, say $H'$, \\
\quad then equal to the weights of each vertex in $H$, \\
\quad add multiple edges to the corresponding edge in $H'$. \\
\quad The resulting graph is $G'$.\\
{\bf Output :} $G'$\\
\end{tabbing}
\end{boxit}
\caption{eLehot Algorithm.}
\end{center}
\label{fig3}
\end{figure}

Note that we do not care about the uniqueness of $G'$. In Figure 4, it is shown that using eLehot algorithm, the last six graphs of the nine forbidden subgraphs (Figure 1) are line multigraphs. The marked edges are denoted by the symbol $``//"$ in the figure.
In the first graph, we have plotted all the steps in the algorithm in  details, but for the others, the final result has been shown. 

\begin{figure*}[t!]
\centering
\label {fig4}\includegraphics[trim= 30mm 120mm 5mm 10mm,clip,scale=.75]{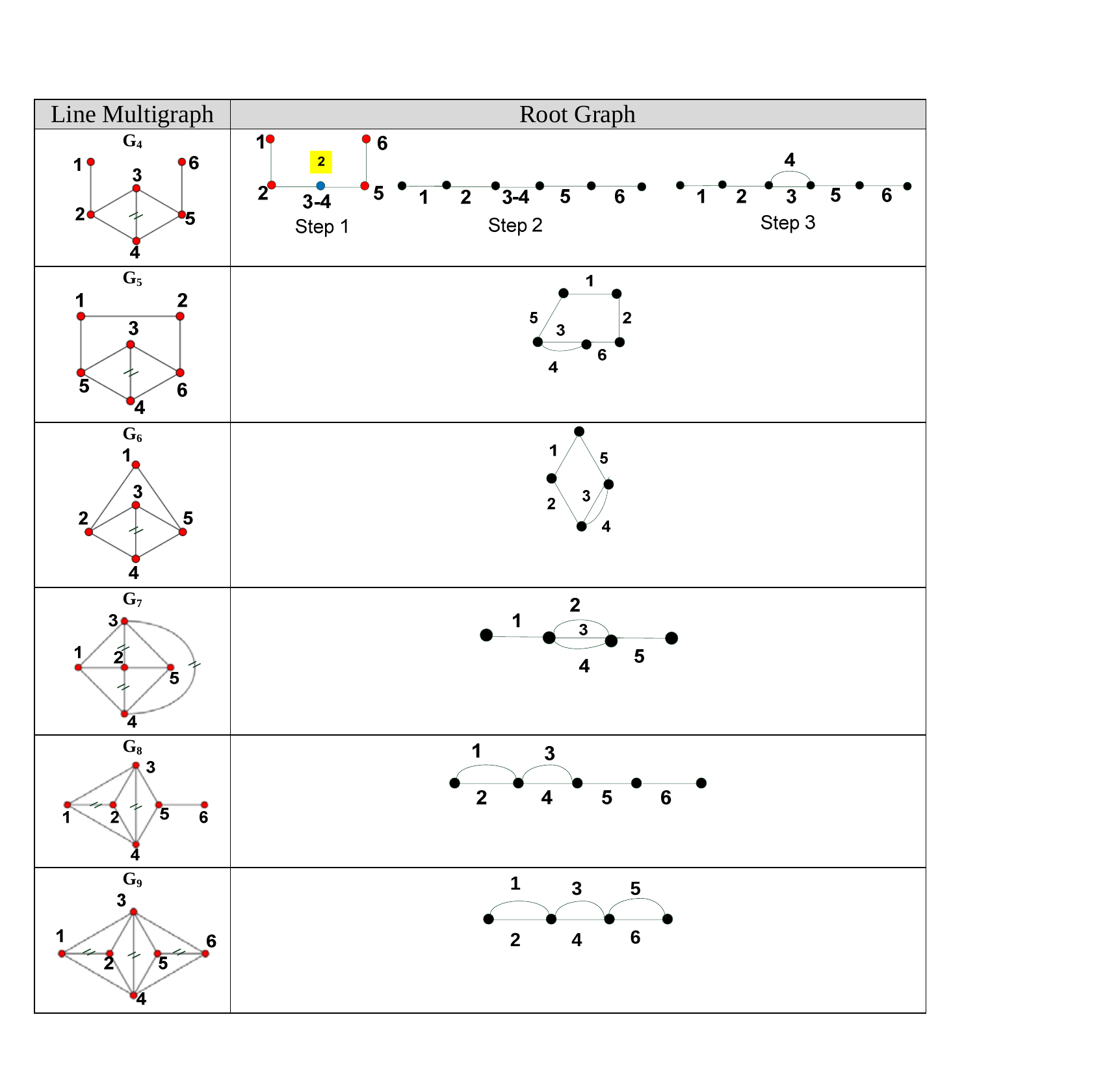}
\caption{Running eLehot algorithm on  the last six graphs in  Figure 1.}
\end{figure*}

\section{eLehot Algorithm Analysis}
In this section,  through three main theorems, we provide the necessary and sufficient condition for the  eLehot algorithm to have an output, prove the correctness of the algorithm and analyze it's complexity . First, we need to prove the following lemmas. 


\begin{lemma}After running Step~1 of eLehot algorithm, no true twin
vertices will remain or produce in the resulting graph. 
\end{lemma}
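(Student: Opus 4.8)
The plan is to exploit the Observation to reduce everything to the equivalence-class structure of true twins, and then argue by contradiction that the contraction in Step~1 cannot create new twins across classes while it obviously destroys all twins inside a class. The consequence of the Observation that I would record first is that the relation ``$u$ and $v$ are true twins, or $u=v$'' is an equivalence relation on $V(G^c)$, so the vertex set partitions into twin classes $C_1,C_2,\ldots$, each of which is a clique, and all vertices of a common class share one and the same closed neighborhood. Since a marked edge is by definition one whose endpoints are true twins, and twins always lie in the same class, the marked edges are exactly the edges lying inside the twin classes; hence contracting all marked edges collapses each class to a single vertex, and the vertices of $H$ are in bijection with the twin classes of $G^c$. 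Because every vertex of a class has the same neighborhood, there is an edge of $G^c$ between a vertex of $C_x$ and a vertex of $C_y$ precisely when every vertex of $C_x$ is joined to every vertex of $C_y$; after discarding the resulting multiplicities so that $H$ is simple, this says that the image vertices $x$ and $y$ are adjacent in $H$ if and only if the classes $C_x$ and $C_y$ are completely joined in $G^c$.

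Next I would suppose, for contradiction, that two distinct vertices $x,y$ of $H$ are true twins, coming from distinct classes $C_x\neq C_y$, and pick representatives $a\in C_x$ and $b\in C_y$. The goal is to show $N_{G^c}[a]=N_{G^c}[b]$, which would make $a$ and $b$ true twins of $G^c$ and force $C_x=C_y$, a contradiction. Adjacency of $x$ and $y$ in $H$ gives that $C_x$ and $C_y$ are completely joined, so $a\sim b$ and the closed neighborhood $N[a]$ is the union of $C_x$, of $C_y$, and of all classes $C_z$ whose image is adjacent to $x$ in $H$; symmetrically for $N[b]$. Since $x$ and $y$ are twins in $H$ we have $N_H[x]=N_H[y]$, and after deleting the mutually present vertices $x,y$ from both sides this says that the two sets of external neighbour classes coincide; substituting that equality into the two unions yields $N[a]=N[b]$, completing the contradiction. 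Finally, within any single class all vertices are identified to one point, so no twin of $G^c$ survives as a twin in $H$; together these two facts give that $H$ has no true twins.

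The step I expect to require the most care is the bookkeeping in the second paragraph: because $a$ and $b$ are themselves adjacent, I must work with closed rather than open neighborhoods and make sure that $C_x$ and $C_y$ are each accounted for on both sides, and that the equality $N_H[x]=N_H[y]$ is first translated into the correct statement about the external classes (those other than $C_x$ and $C_y$) before it is lifted back to $G^c$. A secondary point worth stating cleanly is that passing to the simple vertex-weighted graph, which collapses the multiple edges and loops produced by contracting a whole clique, does not alter the simple adjacency relation on which the twin condition depends, so the argument is unaffected by the multiplicities recorded in the vertex labels.
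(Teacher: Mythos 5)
Your proof is correct and follows essentially the same route as the paper's: both arguments rest on the fact that Step~1 contracts exactly the twin classes (which are cliques with a common closed neighborhood), so that a pair of twins in $H$ would lift back to a pair of true twins in $G^c$ lying in distinct classes, which is impossible. The paper states this more tersely as a direct argument (adjacent vertices of $H$ come from non-twin vertices of $G^c$, so a distinguishing vertex exists and survives the contraction), whereas you run the contrapositive with explicit class bookkeeping; your version actually fills in the verification, glossed over in the paper, that the distinguishing property is preserved when passing between $G^c$ and $H$.
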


\begin{proof} {To see this fact, it is enough to show that for every two adjacent vertices $u$ and $v$ in the resulting graph $H$, there exists a vertex $u'$ adjacent to $u$ which is not adjacent to $v$. 

Since contraction operation does not create any new edges, edge $uv$ exists in $G^c$ and the vertices $u$ and $v$ are not twin in $G^c$.  Hence, there exists a vertex $u'$ in $G^c$ adjacent to $u$ and not adjacent to $v$. Thus, vertex $u'$ is the desired vertex in $H$.
}\end{proof}
Preposition~1 shows that running one round of contraction (Step 1) is sufficient. This property is required in the complexity analysis of eLehot algorithm.

\begin{lemma} For every  induced
subgraph $F$ in $H$,  there exists a twin less  induced subgraph in $G^c$ isomorphic to $F$ and vice versa. 
\end{lemma}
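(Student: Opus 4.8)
The plan is to build an explicit correspondence between induced subgraphs of $H$ and twin-less induced subgraphs of $G^c$ through the contraction map used in Step~1. By Observation~1 the vertices that are mutually twin in $G^c$ form cliques, and Step~1 collapses each maximal such clique to a single vertex of $H$; write $\pi : V(G^c)\to V(H)$ for this contraction map and, for a vertex $v$ of $H$, let $C_v=\pi^{-1}(v)$ be the maximal clique of mutually twin vertices that was contracted to $v$. The structural fact driving everything is that true twins have identical neighborhoods: if $a$ and $a'$ lie in the same clique $C$ and $b\notin C$, then $a\sim b$ if and only if $a'\sim b$. Consequently the adjacency between two distinct cliques $C_i$ and $C_j$ is all-or-nothing, and contraction neither destroys nor manufactures adjacencies between distinct cliques. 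I would first record this uniform-adjacency statement as the technical core, since both directions reduce to it.

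For the forward direction, given an induced subgraph $F$ of $H$ on vertices $v_1,\dots,v_k$, I choose one representative $w_i\in C_{v_i}$ for each $i$ and consider the induced subgraph $F'$ of $G^c$ on $\{w_1,\dots,w_k\}$. I would then check that $w_i\sim w_j$ in $G^c$ exactly when $v_i\sim v_j$ in $H$: one direction is immediate, because any edge between $C_{v_i}$ and $C_{v_j}$ survives contraction as the edge $v_iv_j$; the converse uses the uniform-adjacency fact to promote an arbitrary cross edge to the specific edge $w_iw_j$. Hence $F'\cong F$. To see that $F'$ is twin-less I invoke Observation~1 again: the $w_i$ come from pairwise distinct maximal twin-cliques, so no two of them can be twins, as any two twins would have to lie in a common clique.

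For the converse, let $F'$ be a twin-less induced subgraph of $G^c$ on $\{w_1,\dots,w_k\}$. Since no two $w_i$ are twins, they lie in distinct cliques, so $\pi$ is injective on this set and the images $v_i=\pi(w_i)$ are distinct vertices of $H$. The same adjacency computation as above shows that the induced subgraph of $H$ on $\{v_1,\dots,v_k\}$ is isomorphic to $F'$, which closes both directions.

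The main obstacle is the uniform-adjacency claim, i.e. verifying that contracting the twin-cliques preserves adjacency exactly---creating no spurious edges between distinct cliques and losing none---so that the particular choice of representative is irrelevant. This is precisely where the defining property of true twins (equal closed neighborhoods) together with the transitivity and clique structure from Observation~1 must be used; once it is in hand, both directions become routine bookkeeping on the vertex sets. A secondary point to pin down is the exact reading of ``twin-less'' (no two vertices of the subgraph are twins in $G^c$), which is what guarantees that $\pi$ is injective on the chosen vertex set and keeps the correspondence well defined.
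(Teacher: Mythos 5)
Your proposal is correct and follows essentially the same route as the paper's proof: both select one representative vertex from each contracted twin-clique and argue that adjacency and non-adjacency are preserved under the contraction, with the reverse direction handled symmetrically. Your write-up is in fact more careful than the paper's, since you isolate the uniform-adjacency property of twin-cliques as the key step and explicitly verify that the chosen representatives are pairwise non-twin (via Observation~1), a point the paper leaves implicit.
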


\begin{proof}{
 To see this, suppose that $F$ is an induced subgraph of $H$. First, we
show that there exists a subgraph $F$ in the main graph $G^c$.
According to Observation~1, each vertex of $F$ with multiplicity
$t$, is representative of a clique, $K_t$ in $G^c$. Now
to construct a subgraph $F$ in $G^c$, it is sufficient
to select one vertex from the cliques corresponding to the
vertices of $F$ and make the adjacency between these vertices the
same as the adjacency of the vertices in $F$ (Figure~5 clarifies
this approach). The obtained subgraph in $G^c$ is induced subgraph isomorphic to $F$, since the adjacency and non adjacency relation of the corresponding vertices are preserved in the contraction.

 Similarly, the vice versa of this process can be used to obtain the desired twin less induced subgraph of  $G^c$ in $H$. }
\end{proof}

\begin{figure*}[t]
\centering
\label{fig5}\includegraphics[trim= 40mm 230mm 5mm 20mm,clip,scale=1]{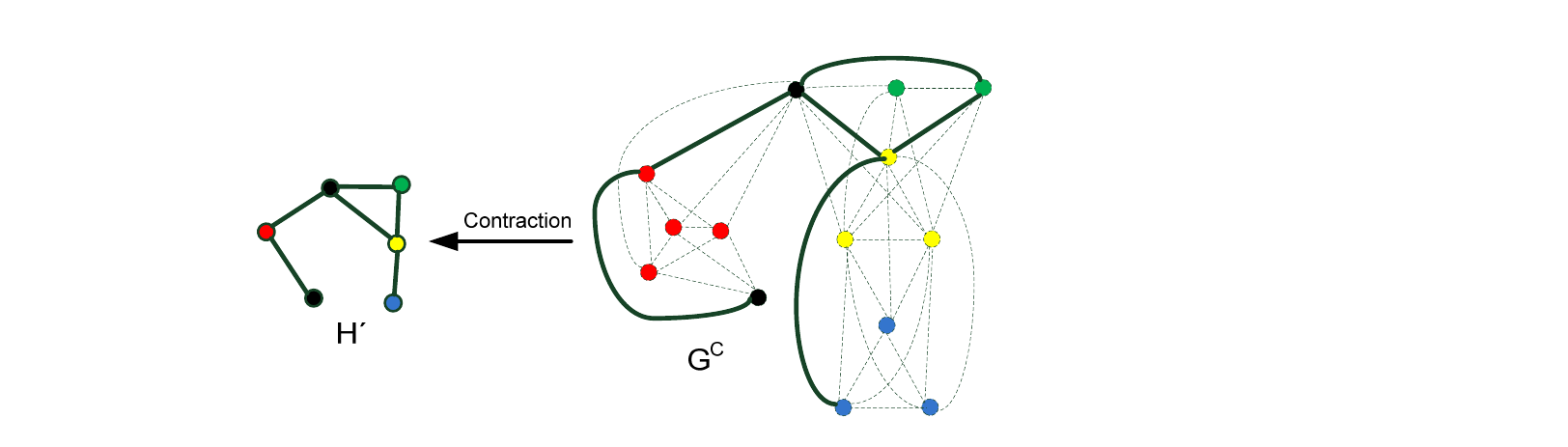}
\caption{Induced subgraphs of H can be found as induced subgraphs in $G^C$ (Colored vertices are contracted ones). }
\end{figure*}



\begin{prethm}
The eLehot algorithm has an output if and only if $G^c$ contains no induced subgraph $\{F_1, F_2, ..., F_7\}$ shown in Figure~6. 
\end{prethm}

\begin{figure*}[t]
\centering
\label{fig6}\includegraphics[trim= 30mm 210mm 5mm 10mm,clip,scale=.75]{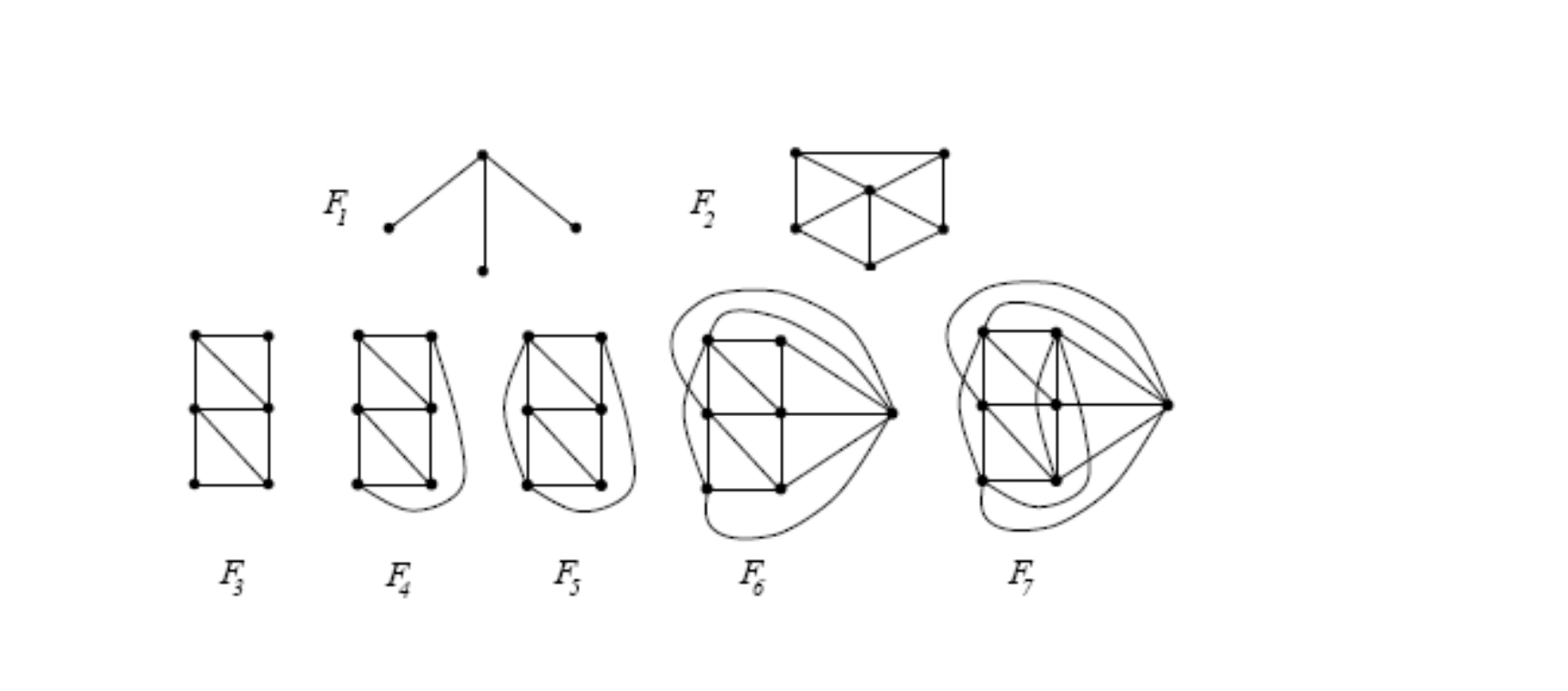}
\caption{Seven forbidden graphs of Theorem 1.}
\end{figure*}

\begin{proof}{
First, it is easy to see that the eLehot algorithm on $G^c$ has an output if and only if the Lehot algorithm on $H$ has an output. On the other hand, by Beineke's theorem \cite{beineke1968} it is known that Lehot algorithm has an output if and only if it's input graph contains no induced subgraph $\{G_1, G_2, ..., G_9\}$ shown in Figure~1.

Therefore, to prove the statement it is enough to see that graph $H$ contains an induced subgraph $\{G_1, G_2, ..., G_9\}$ if and only if the conflict graph $G^c$ contains an induced subgraph $\{F_1, F_2, ..., F_7\}$.

Note that, by Lemma~1, the resulting graph $H$ after running Step~1 of eLehot algorithm, removes all twin vertices in $G^c$ and don't produce new twin vertices. Hence, $H$ is a twin less graph. Also, by Lemma~2, if $F$ is an induced subgraph of $H$, then $G^c$ contains an induced subgraph isomorphic to $F$ and vice versa. 

We  divide the graphs of Figure 1 to two classes, $\mathcal{E}=\{G_1,G_2,G_3\}$, say twin less graphs, and $\mathcal{E'}=\{G_4,...,G_9\}$ that all have twin vertices as shown in Figure~4. 

First assume that $H$ contains one of the induced subgraphs $\{G_1, ..., G_9\}$. The key point that $H$ is a twin less graph leads us to examine graphs in $\mathcal{E'}$ one by one and for each of them show that how insertion of new neighbor vertex (or vertices) for one of the twin vertices can make the graph without twin vertices. The extracted minimal twin less subgraphs are the new forbidden graphs. This process shows that these new forbidden graphs are four graphs  $F_4, F_5, F_6, F_7$ in Figure~6.
 
Note that the above argument does not hold for graphs in  class $\mathcal{E}$. Since these graphs do not have any twin vertices, graph $H$ can be any of them and then they are still minimal forbidden graphs. Therefor the  minimal forbidden graphs for $H$ are three graphs of twin less class, $\mathcal{E}$, which are shown in Figure~6 by $F_1,F_2$ and $F_3$ in addition to four graphs that are derived from graphs in class $\mathcal{E'}$ and is depicted in Figure~6 by $F_4, F_5, F_6$, $F_7$. 

In what follows, we consider each of six graphs of class $\mathcal{E'}$ separately to see how we can make them twin less by adding the minimum number of vertices  to twin vertices. Whenever we encounter  to one of the  known forbidden (induced) subgraph, we terminate and go to the next case. We refer to Figure~7 to see the process of constructing the forbidden subgraphs. 

\noindent {\bf { i)} Consider the graph $G_4$}

Look at Figure 7-A1 The vertices 3 and 4 are true twins. To remove twin property, there should be another vertex $x$ that is adjacent to either 3 or 4. Due to the symmetry of $G_4$, we suppose that $x$ is adjacent to 4. The following options for adjacency of $x$ to other nodes are possible. Note that the symmetry of the graph helps us to eliminate similar cases and keep only one of them for investigation. 

If $x$ is only adjacent to the vertex 4, then the four vertices $\{4,2,5,x\}$ make graph $F_1$ (claw) regardless of adjacency of $x$ to 1 and 6. If $x$ is adjacent to the vertices 4 and 5, then the four vertices $\{5,3,x,6\}$ make a claw. If $x$ is adjacent to the vertices 4, 5 and 6 as shown in Figure 7-A1, then the resulting graph includes graph $F_3$ as induced subgraph (by deleting vertex 1). If $x$ is adjacent to the vertices 4,5,6 and 2, then $\{2,1,3,x\}$ is a claw.
If $x$ is adjacent to the vertices 4, 5, 6 and 1, then $\{x,1,4,6\}$ is a claw regardless of adjacency of $x$ to vertex 2. 
If $x$ is adjacent to the vertices 4, 5, 6, 2 and 1, then $\{x,1,4,6\}$ is a claw. (Figure 7-A2)

The above investigations show that $G_4$ can be removed from the list of forbidden graphs since prevention of $F_1$ and $F_3$ provides the same result.

\noindent {\bf { ii)} Consider the graph $G_5$}

Look at Figure 7-B1. The vertices 3 and 4 are  twin. To remove twin property, there should be another vertex $x$ that is adjacent to either 3 or 4. Due to the symmetry of $G_5$, we suppose that $x$ is adjacent to 4. The following options for adjacency of $x$ to other vertices are possible. Note that the symmetry of the graph helps us to eliminate similar cases and keep only one of them for investigation. 

If $x$ is only adjacent to vertex 4, then $\{4,2,5,x\}$ is a claw regardless of adjacency of $x$ to 1 and 6. 
If $x$ is adjacent to the vertices 4 and 5, then the four vertices $\{5,3,x,6\}$ make a claw. 
If $x$ is adjacent to the vertices 4, 5 and 6, then the resulting graph includes graph $F_3$ as induced subgraph (by deleting vertex $1$, in Figure 7-B1)
If $x$ is adjacent to the verices 4, 5, 6 and 2, then $\{2,1,3,x\}$ is a claw.
If $x$ is adjacent to the vertices 4, 5, 6 and 1, the resulting graph includes graph $F_3$ as induced subgraph (by deleting vertex $1$).
If $x$ is adjacent to the vertices 4,5,6,2 and 1, the resulting graph includes graph $F_2$ as induced subgraph (by deleting vertex $3$, in Figure 7-B2). 
The above investigations show that $G_5$ can be removed from the list of forbidden graphs since prevention of $F_1, F_2$ and $F_3$ provides the same result.

\noindent {\bf { iii)} Consider the graph $G_6$}

Look at Figure 7-C1. The vertices 3 and 4 are  twin. To remove twin property, there should be another vertex $x$ that is adjacent to either 3 or 4. Due to the symmetry of $G_4$, we suppose that $x$ is adjacent to 4. The following options for adjacency of $x$ to other vertices are possible. 

If $x$ is only adjacent to the vertex 4, then $\{4,1,2,x\}$ is a claw.
If $x$ is adjacent to the vertices 4 and 2, then $\{2,5,3,x\}$ is a claw.
If $x$ is adjacent to the vertices 4, 2 and 5, the achieved subgraph is shown in Figure 7-C1 and should be added to the list of forbidden graphs for line multigraphs since it is a new minimal graph that contains one of Beineke's forbidden graphs ($G_6$) as an induced subgraph and does not have any twin vertices. We call this graph as $F_4$ in Figure~6.

If vertex $x$ is adjacent to the vertices 4, 2, 5 and 1, the achieved subgraph is shown in Figure 7-C2 and should be added to the list of forbidden graphs for line multigraphs since it is a new minimal graph that contains one of Beineke's forbidden graphs ($G_6$) as induced subgraph and does not have any twin vertices. We call this graph as $F_5$ in Figure~6.

\noindent{\bf { iv)} Consider the graph $G_7$}

This graph contains three mutual twin vertices. Therefore, we need two extra vertices say $x$ and $y$ to remove twin property of the graph. All the adjacency possibilities that make the graph twin less are discussed in the following. 
Note that the symmetry of the twin vertices 3, 4 and 5 and the symmetry of vertices 1 and 2 in Figure 7-D1 helps us to abstract the possible options as follows. 

We should consider three cases. 

\noindent { Case 1.} Vertex $x$ is adjacent to vertex $5$ and vertex $y$ is adjacent to vertex $3$.

If  vertex $x$ is only adjacent to vertex $5$, then $\{5,x,1,2\}$ is a claw. The same occurs if vertex $y$ is only adjacent to one of twin vertices $3$ or $4$. Hence, vertices $x$ and $y$ should be adjacent to more than one vertex of $G_7$. 

If $x$ is adjacent to the vertices 5 and 1; and $y$ is adjacent to the vertices 3 and 1, then $\{1,x,y,4\}$ is a claw (Figure 7-D1).
If $x$ is adjacent to the vertices 5 and 1; $y$ is adjacent to the vertices 3 and 1; and $x$ and $y$ are adjacent, then graph $F_3$ is an induced subgraph of the achieved graph which is shown in Figure 7-D2 (remove vertex 3).

If $x$ is adjacent to the vertices 5 and 1; $y$ is adjacent to the vertices 3 and 2; then graph $F_3$ is an induced subgraph of the achieved graph which is shown in Figure 7-E1 (remove vertex 4).
If $x$ is adjacent to the vertices 5, 1 and 2; $y$ is adjacent to the vertices 3 and 1; $x$ and $y$ are adjacent, then the constructed graph contains graph $F_5$ as induced subgraph as shown in Figure 7-E2. The induced graph $F_5$ is achieved by removing vertex 4 and is redrawn in Figure 7-E3 for clarity. 

If $x$ is adjacent to the vertices 5, 1 and 2; $y$ is adjacent to the vertices 3 and 2; $x$ and $y$ are adjacent (Figure 7-F1), then the constructed graph contains graph $F_5$ as induced subgraph as shown in Figure 7-F2. The induced graph $F_5$ is achieved by removing vertex 4 and is redrawn in Figure 7-F3 for clarity. 

If $x$ is adjacent to the vertices 5, 1 and 2; $y$ is adjacent to the vertices 3, 1 and 2; $x$ and $y$ are adjacent (Figure 7-G1), then the constructed graph contains graph $F_5$ as induced graph as shown in Figure 7-G2. The induced graph $F_5$ is achieved by removing vertex 5 and is redrawn in Figure 7-G3 for clarity. 

\noindent{ Case 2.} Vertex $x$ is adjacent to the vertices 5 and 4; vertex  $y$ is adjacent to the vertex 4.

If $x$ is adjacent to the vertices 5 and 4; $y$ is adjacent to the vertex 4 (Figure 7-H1), then $\{4,x,y,2\}$ , $\{4,x,y,3\}$ , $\{4,x,y,1\}$ , $\{4,2,1,x\}$, $\{4,2,1,y\}$ and $\{5,1,2,x\}$ are different induced claws. Since none of $x$ and $y$ could be adjacent to the vertex 3 in this case, adjacency of $x$ to $y$ is mandatory to prohibit claw $\{4,x,y,3\}$, otherwise this claw exists in all the scenarios. Thus, in other situations under case 2, we suppose $x$ and $y$ are adjacent. Also, $x$ and $y$ should be adjacent to the vertices 1 and/or 2 to prohibit claws $\{4,2,1,x\}$ and $\{4,2,1,y\}$. These observations leads us to the following scenarios. 

If vertex $x$ is adjacent to the vertices 5, 4 and 1; $y$ is adjacent to the vertices 4, 1; $x$ and $y$ are adjacent (Figure 7-H2), then the constructed graph contains $F_3$ as induced subgraph by removing vertex 4. 
If $x$ is adjacent to the vertices 5, 4 and 2; $y$ is adjacent to the vertices 4 and 2; $x$ and $y$ are adjacent (Figure 7-H3), then the constructed graph contains $F_3$ as induced subgraph by removing vertex 4 which is shown in the same figure. 

If $x$ is adjacent to the vertices 5, 4 and 2; $y$ is adjacent to the vertices 4 and 1; $x$ and $y$ are adjacent (Figure 7-I1), then the constructed graph contains $F_2$ as induced subgraph by removing vertex 5 which is shown in the same figure. 
If $x$ is adjacent to the vertices 5, 4 and 1; $y$ is adjacent to the vertices 4 and 2; $x$ and $y$ are adjacent (Figure 7-I2), then the constructed graph contains $F_2$ as induced subgraph by removing vertex 5 which is shown in the same figure. 
If $x$ is adjacent to the vertices 5, 4 and 1; $y$ is adjacent to the vertices 4, 2 and 1; $x$ and $y$ are adjacent (Figure 7-I3), then the constructed graph contains $F_4$ as induced subgraph by removing vertex 4. 

If $x$ is adjacent to the vertices 5, 4, 1 and 2; $y$ is adjacent to the vertices 4 and 2; $x$ and $y$ are adjacent (Figure 7-J1), then this is a new graph that does not contain any of previously found forbidden graphs and then should be added to the list of forbidden graphs. We rearrange its illustration as shown in Figure 7-J2 and call it as graph $F_6$. 

If $x$ is adjacent to the vertices 5, 4, 1 and 2; $y$ is adjacent to the vertices 4 and 1; $x$ and $y$ are adjacent (Figure 7-K1), then the constructed graph is the same as graph $F_6$.
If $x$ is adjacent to the vertices 5, 4 and 2; $y$ is adjacent to the vertices 4, 1 and 2; $x$ and $y$ are adjacent (Figure 7-K2), then the constructed graph contains $F_4$ as induced subgraph by removing vertex 4 (Figure 7-K3). 

If $x$ is adjacent to the vertices 5, 4, 1 and 2; $y$ is adjacent to the vertices 4, 1 and 2; $x$ and $y$ are adjacent (Figure 7-L1), then the constructed graph contains $F_5$ as induced subgraph by removing vertex 4 which is shown in Figure 7-L2. 

\noindent { Case 3.} Vertex $x$ is adjacent to the vertices 5 and 4; vertex $y$ is adjacent to the vertices 3 and 4.

If vertex  $x$ is adjacent to the vertices 5 and 4; $y$ is adjacent to the vertices 3 and 4 (Figure 7-M1), then $\{4,x,y,2\}$, $\{4,x,y,1\}$, $\{4,2,1,x\}$, $\{4,2,1,y\}$, $\{5,1,2,x\}$ and $\{3,y,1,2\}$ are different induced claws. We investigate the scenarios in which the mentioned claws does not exists. We first study the options that $x$ and $y$ are not adjacent and then consider the cases that $x$ and $y$ are adjacent.

If vertex $x$ is adjacent to the vertices 5, 4, 1; $y$ is adjacent to the vertices 3, 4, 2 (Figure 7-M2), then the constructed graph contains $F_3$ as induced subgraph by removing vertex 4 which is shown in Figure 7-M3. 

If vertex $x$ is adjacent to the vertices 5, 4, 1; $y$ is adjacent to the vertices 3, 4, 2,1 (Figure 7-N1), then the constructed graph is $F_6$ as shown in Figure 7-N2 by rearranging the position of vertices. 

If vertex $x$ is adjacent to the vertices 5, 4, 2; $y$ is adjacent to the vertices 3, 4, 1 (Figure 7-O1), then the constructed graph contains $F_3$ as induced subgraph by removing vertex 4 which is shown in Figure 7-O2. 

If vertex $x$ is adjacent to the vertices 5, 4, 2; $y$ is adjacent to the vertices 3, 4, 1,2 (Figure 7-P1), then the constructed graph is isomorphic to graph $F_6$ as shown in Figure 7-P2.

If vertex $x$ is adjacent to the vertices 5, 4, 1, 2; $y$ is adjacent to the vertices 3, 4, 1 (Figure 7-Q1), then the constructed graph is isomorphic to graph $F_6$ as shown in Figure 7-Q2.

If vertex $x$ is adjacent to the vertices 5, 4, 1, 2; $y$ is adjacent to the vertices 3, 4, 2 (Figure 7-R1), then the constructed graph is isomorphic to graph $F_6$ as shown in Figure 7-R2.

If vertex $x$ is adjacent to the vertices 5, 4, 1, 2; $y$ is adjacent to the vertices 3, 4, 1, 2 (Figure 7-S1), then the constructed graph contains $F_5$ as induced subgraph as shown in Figure 7-S2.

If vertex $x$ is adjacent to the vertices 5, 4, 1; $y$ is adjacent to the vertices 3, 4, 2 and $x$ is adjacent to $y$ (Figure 7-T1), then the constructed graph contains $F_4$ as induced subgraph by removing vertex 4 which is shown in Figure 7-T2. 

If vertex $x$ is adjacent to the vertices 5, 4, 1; $y$ is adjacent to the vertices 3, 4, 2, 1 and $x$ is adjacent to $y$ (Figure 7-U1), then the constructed graph contains $F_5$ as induced subgraph which is shown in Figure 7-U2 by removing vertex 4. 

If vertex $x$ is adjacent to the vertices 5, 4, 2; $y$ is adjacent to the vertices 3, 4, 1 and $x$ is adjacent to $y$ (Figure 7-V1), then the constructed graph contains $F_4$ as induced subgraph by removing vertex 4 which is shown in Figure 7-V2. 

If vertex $x$ is adjacent to the vertices 5, 4, 2; $y$ is adjacent to the vertices 3, 4, 1,2 and $x$ is adjacent to $y$ (Figure 7-W1), then the constructed graph contains $F_5$ as induced subgraph which is shown in Figure 7-W2.

If vertex $x$ is adjacent to the vertices 5, 4, 1, 2; $y$ is adjacent to the vertices 3, 4, 1 and $x$ is adjacent to $y$(Figure 7-X1), then the constructed graph contains $F_5$ as induced subgraph which is shown in Figure 7-X2.

If vertex $x$ is adjacent to the vertices 5, 4, 1, 2; $y$ is adjacent to the vertices 3, 4, 2 and $x$ is adjacent to $y$ (Figure 7-Y1), then the constructed graph contains $F_5$ as induced subgraph which is shown in Figure 7-Y2.

If vertex $x$ is adjacent to the vertices 5, 4, 1, 2; $y$ is adjacent to the vertices 3, 4, 1, 2 and $x$ is adjacent to $y$ (Figure 7-Z1), then this is a new graph that does not contain any of previously found forbidden graphs and then should be added to the list of forbidden graphs. We call it as graph $F_7$.

\noindent{\bf { v)} Consider the graph $G_8$}

This graph contains two couple of twin vertices, nevertheless one extra vertex say $x$ suffices to remove twin property of the graph. It is because the twin vertices, unlike the graph $G_7$, do not share any common vertex and are completely separated couples. Meanwhile, due to the symmetry of the graph, only one possible solution for removing twin property of the graph exists which is shown in Figure 7-$\Theta 1$. The obtained graph is not a new graph since it contains claw $\{4,5,x,2\}$. 
Indeed any of the vertices 3 or 4 which makes adjacency with $x$  (vertex 4 in this figure), in addition to one vertex out of the set of vertices $\{1,2\}$ which is not adjacent to vertex $x$ (vertex 2 in the figure) in addition to vertex 5 and $x$ always make a claw. 
To prohibit the resulted claw, we consider the case that vertex $x$ is adjacent to vertex 5 too. Then a new claw, $\{5,6,3,x\}$ is made. Then the only possibility to prohibit this claw is making vertex $x$ adjacent to vertex 6. The derived graph is shown in Figure 7-$\Theta 2$. The result is graph $F_3$ which is depicted in Figure 7-$\Theta 3$ by rearranging the illustration of Figure 7-$\Theta 2$. 

\noindent {\bf { vi)} Consider the graph $G_9$}

This graph contains three couple of twin vertices, nevertheless one extra vertex say $x$ suffices to remove twin property of the graph. It is because the twin vertices, do not share any common vertex and are completely separated couples. Meanwhile, due to the symmetry of the graph, only one possible solution for removing twin property of the graph exists which is shown in Figure 7-$\Pi 1$. The obtained graph is not a new graph since it contains the claw $\{3,6,x,2\}$. Note that this claw could not be prohibited by connecting vertex $x$ to neither vertex 6, nor vertex 2, otherwise a twin couple is constructed again. Indeed any of the vertices 3 or 4 (vertex 3 in this figure) which makes adjacency with vertex $x$ in addition to one vertex out of each other twin vertices which is not adjacent to $x$ (vertices 6 and 2 in the figure) always make a claw. Therefore, graph $G_9$ does not result in a new forbidden graph. 

To complete the proof of Theorem~1, note that regarding to construction of the graphs $F_1,F_2,...,F_7$, it can be seen that every graph $F_i$, $1\leq i\leq 7$, is a twin less graph contains one of the induced subgraphs $G_1,G_2,...,G_9$. Thus, if $G^c$ contains one of the induced subgraph $F_i$, $1\leq i\leq 7$, then by Lemma~2 its induced subgraph $G_1,G_2,...,G_9$ preserves in graph $H$. 
}\end{proof} 

In the following theorem, we prove that eLehot algorithm begets the root graph of  the conflict graph $G^c$.

\begin{prethm}
If graph $G'$ is the output of eLehot algorithm on conflic graph $G^c$, then $G'$ is the root graph of conflict graph $G^c$, i.e. $L(G')=G^c$.
\end{prethm}

\begin{proof}{
Asuume that $G'$ is the output of the eLehot algorithm and $H'$ be a simple graph obtained by $G'$ after removing the multiple edges of $G'$ but keeping one edge. Note that in Step 3 of eLehot algorithm, we make the multiple edges according to the label of vertices in $H=L(H')$. We keep the multiplicity of each edge as a label of its corresponding vertex in $H$. 
Therefore, the line graph of $G'$ is a graph obtained from $H$ by replacing every vertex by a clique of the size of its label. According to Step~1, this graph is the original graph $G^c$ as desired. 
}\end{proof}

By Theorems 1 and 2, we have the following corollaries. 

\begin{precor}
If the conflict graph $G^c$ contains no induced subgraph $F_1,F_2,...,\linebreak F_7$,  then $G^c$ is the line multigrah of graph $G'$, where $G'$ is an output of eLehot algorithm.
\end{precor}

\begin{precor}
A given graph G is a line multigraph if and only if eLehot algorithm has an output for it.
\end{precor}

\begin{proof}{
The necessity is the result of Theorem~2.
To see the sufficiency,  assume that  G is a line multigraph. In~\cite{bermond1973, hemminger1972}  it has been shown that a line multigraph does not contain any subgraphs of $\{F_1, F_2, ..., F_7\}$ as induced subgraph.  Moreover, by Theorem~1 we know that if G contains no graphs of $\{F_1, F_2, ..., F_7\}$ as induced subgraph, then eLehot algorithm has an output.
}\end{proof}
 
We analyze the algorithm's complexity in the following theorem. 
 
\begin{prethm}
The time complexity of eLehot algorithm is $O(|E|^3)$.
\end{prethm}

\begin{proof}{
According to Eq. (\ref{eq2}), the time complexity of constructing
$G^c (E,L)$ from $G(V,E)$ is $O(|E|^2)$. Running Step~1 of the
algorithm has complexity of $O(|L|.|E|)$ or $O(|E|^3)$,
considering that in the worse case maximum of $|L|$ could be up to
${{|E|(|E|-1)}\over 2}$. According to the time complexity
of Lehot algorithm, Step~2 has the complexity
$O(|E|^2)+O(|E|)$~\cite{lehot1974}. Step~3 of the
proposed algorithm has the complexity of $O(|E|)$.
Consequently, the overall process of
constructing $G'$ has polynomial time complexity of $O(|E|^3)$. 
}\end{proof}

\section{Conclusions and Future work}

In this paper, we have generalized the concept of line graphs to line multigraphs and applied it to the conflict graph of stationary wireless networks for the purpose of link scheduling. It is shown that applying $MWM$ algorithm on the root graph of the conflict graph is equivalent to the link scheduling under general M-hop interference model in the network graph. We have proposed an algorithm 
to detect whether the conflict graph is line multigraph and output its root graph. The proposed algorithm is an extension of the well known Lehot algorithm to the line multigraphs and is called eLehot.

While applying the throughput optimal link scheduling algorithm in general is an NP-Hard problem, our overall proposed method results in a low complexity polynomial time algorithm, provided that the conflict graph is line multigraph. It was shown that how the derived conditions can be satisfied by network designers through topology control of the network by prohibiting the construction of seven forbidden graphs in the conflict graph. We believe that the results of this paper can be used as a guideline for network designers to plan the topology of a stationary wireless network such that the required conditions hold and then the throughout optimal algorithm can be run in a much less time.  As a future plan, we aim to design a topology control algorithm based on the results of this paper. 

\begin{figure*}[t]
\centering
\label {fig7}\subfloat {\includegraphics[trim= 10mm 35mm 5mm 25mm,clip,scale=.42]{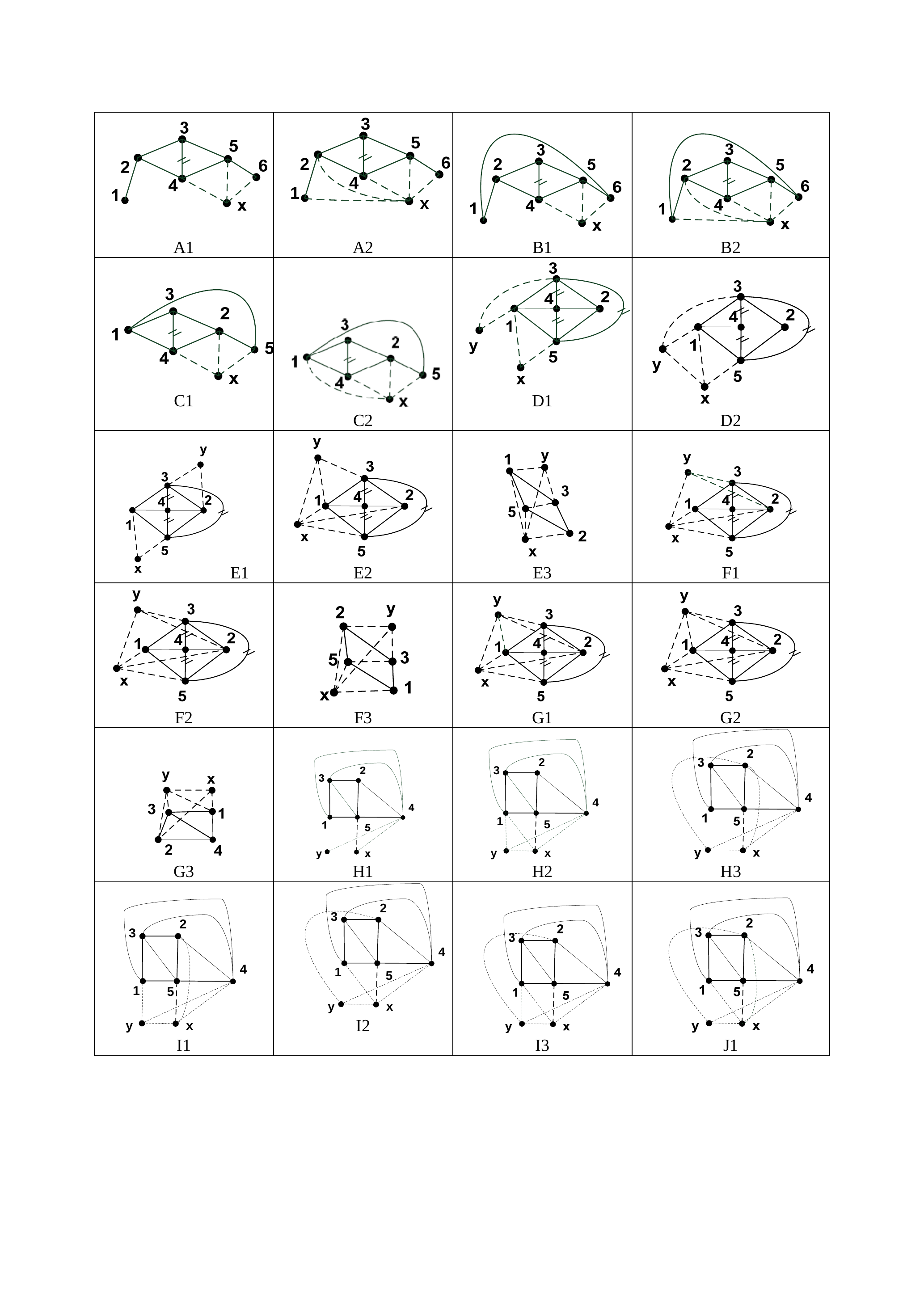}}
\subfloat {\includegraphics[trim= 10mm 28mm 5mm 25mm,clip,scale=.4]{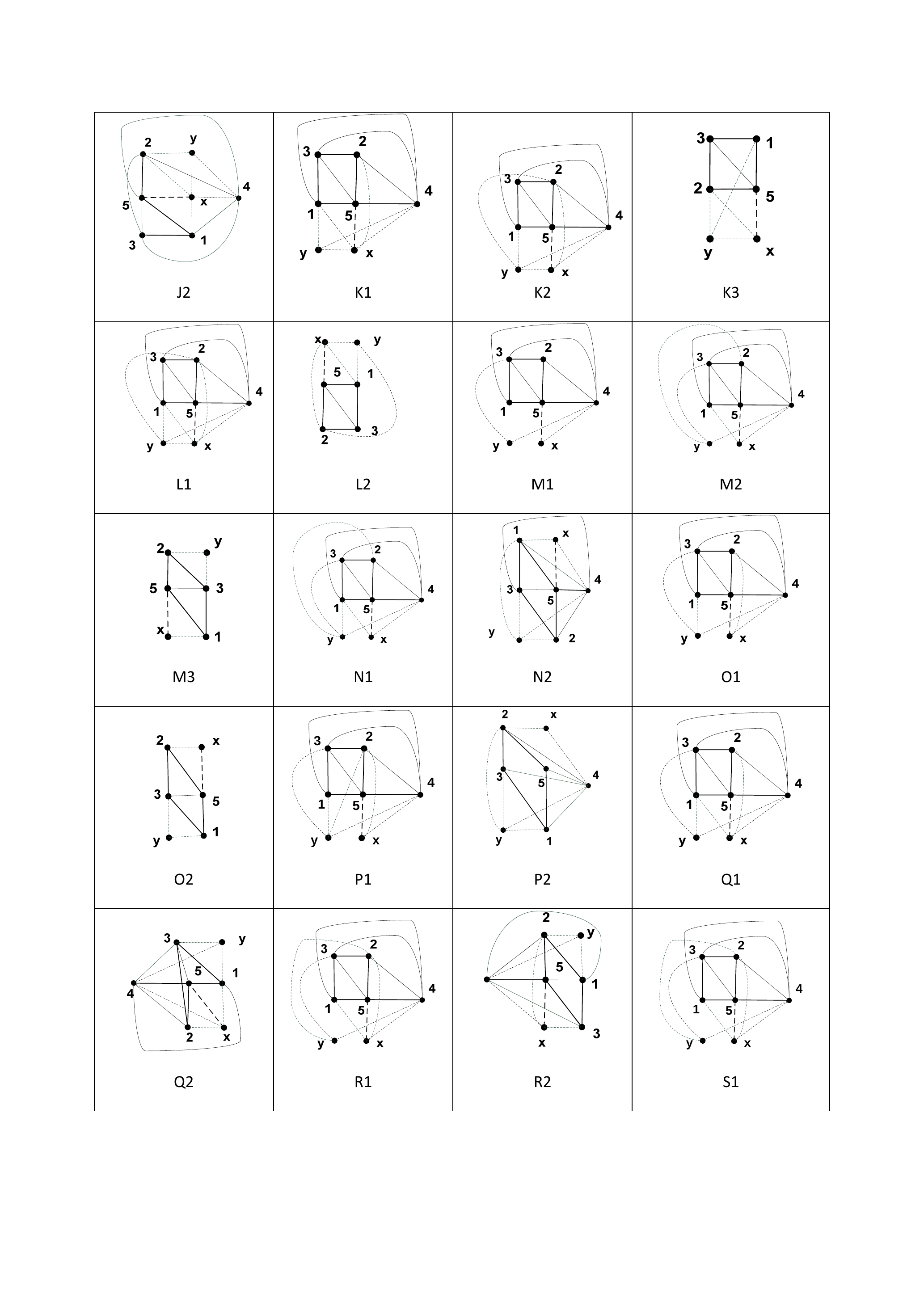}}\\
\subfloat {\includegraphics[trim= 10mm 28mm 5mm 20mm,clip,scale=.4]{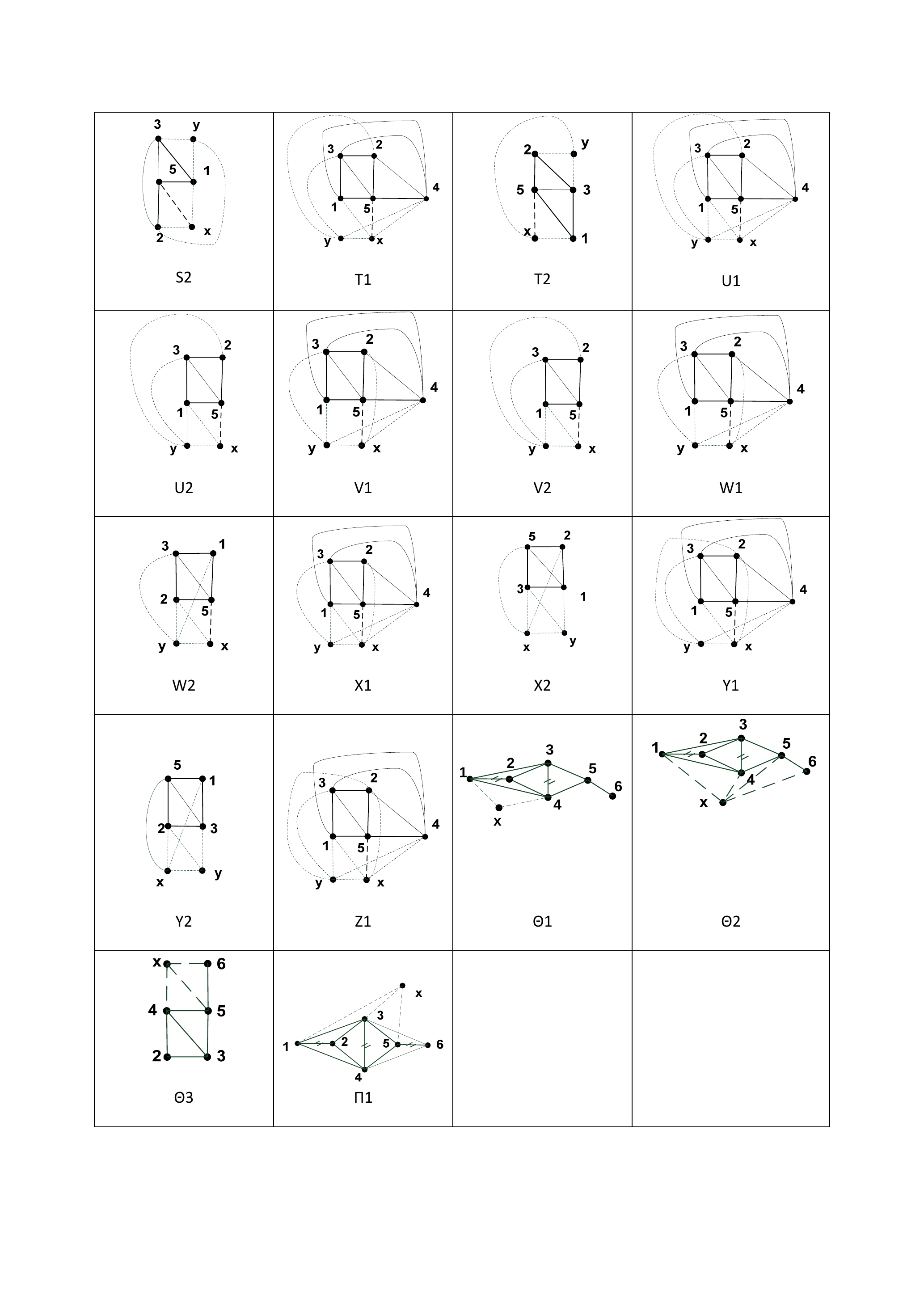}}
\caption{Construction of Forbidden Graphs.}
\end{figure*}

  

\end{document}